\newtheorem{theorem}{Theorem}[section]
\newtheorem{proposition}[theorem]{Proposition}
\newtheorem{corollary}[theorem]{Corollary}
\newenvironment{remark}[1][Remark.]{\begin{trivlist}
\item[\hskip \labelsep {\bfseries #1}]}{\end{trivlist}}
\newenvironment{example}[1][Examples.]{\begin{trivlist}
\item[\hskip \labelsep {\bfseries #1}]}{\end{trivlist}}
\title{A formula for the number of spanning trees in circulant graphs with non-fixed generators and discrete tori\footnote{The author acknowledges support from the Swiss NSF grant $200021\_132528/1$.}}
\author{Justine Louis}
\date{15 December 2014}
\DeclareMathOperator{\argcosh}{Argcosh}
\begin{document}
        \maketitle

\begin{abstract}
We consider the number of spanning trees in circulant graphs of $\beta n$ vertices with generators depending linearly on $n$. The matrix tree theorem gives a closed formula of $\beta n$ factors, while we derive a formula of $\beta-1$ factors. Using the same trick, we also derive a formula for the number of spanning trees in discrete tori. Moreover, the spanning tree entropy of circulant graphs with fixed and non-fixed generators is compared.
\end{abstract}

\section{Introduction}
A spanning tree of a connected graph $G$ is a connected subgraph of $G$ without cycles with the same vertex set as $G$. The number of spanning trees in a graph $G$, $\tau(G)$, is an important graph invariant and is widely studied. It can be computed from the well-known matrix tree theorem due to Kirchhoff (e.g. see \cite{MR1271140}). Let $V(G)$ be the set of vertices of $G$ and $f:V(G)\rightarrow\mathbb{R}$ a function. The combinatorial Laplacian on $G$ is defined by
\begin{equation*}
\Delta_Gf(x)=\sum_{y\sim x}(f(x)-f(y))
\end{equation*}
where the sum is over all vertices adjacent to $x$. The matrix tree theorem states that
\begin{equation}
\label{kirchhoff}
\tau(G)=\frac{1}{\lvert V(G)\rvert}\textnormal{det}^\ast\Delta_G
\end{equation}
where $\textnormal{det}^\ast\Delta_G$ denotes the product of the non-zero eigenvalues of the Laplacian on $G$. In this paper we prove closed formulas for $\tau(G)$ for two types of graphs in terms of eigenvalues of the Laplacian on a subgraph of $G$. The formulas are particularly interesting when the number of vertices is larger than the other parameters of the graph.

Let $1\leqslant\gamma_1\leqslant\ldots\leqslant\gamma_d\leqslant\lfloor n/2\rfloor$ be positive integers. A circulant graph $C^{\gamma_1,\ldots,\gamma_d}_{n}$ is the $2d$-regular graph with $n$ vertices labelled $0,1,\ldots,n-1$ such that each vertex $v\in\mathbb{Z}/n\mathbb{Z}$ is connected to $v\pm\gamma_i$ mod $n$ for all $i\in\{1,\ldots,d\}$. The first type of graphs studied is the circulant graph with the first generator equal to one and the $d-1$ others linearly depending on the number of vertices, that is $C^{1,\gamma_1n,\ldots,\gamma_{d-1}n}_{\beta n}$, where $1\leqslant\gamma_1\leqslant\ldots\leqslant\gamma_{d-1}\leqslant\lfloor\beta/2\rfloor$ and $\beta$ are integers. Two examples are illustrated in Figure \ref{graph} below. It is known that the number of spanning trees in circulant graphs with $n$ vertices satisfies a linear recurrence relation with constant coefficients in $n$, this has been shown by Golin, Leung and Wang in \cite{golin2005counting}. For $\beta\in\{2,3,4,6,12\}$, closed formulas 
have been obtained by Zhang, Yong and Golin in \cite{zhang2005chebyshev} where the authors used techniques inspired from Boesch and Prodinger \cite{boesch1986spanning} using Chebyshev polynomials. As noted in \cite{zhang2005chebyshev} this method does not work for other values of $\beta$. In section \ref{th}, we derive Theorem \ref{ThCirc} in a simple way which gives a closed formula for all integer values of $\beta$. This gives an answer to an open question in \cite{golin2005unhooking} and \cite{zhang2005chebyshev} and proves the conjecture stated in \cite{louis2013asymptotics}. The second type of graphs studied is the $d$-dimensional discrete torus defined by the quotient $\mathbb{Z}^d/\Lambda\mathbb{Z}^d$, where $\Lambda$ is a diagonal integer matrix, with nearest neighbours connected. In the last section, we deduce the tree entropy for a sequence of non-fixed generated circulant graphs and compare it to the one with fixed 
generators.
\begin{figure}[!ht]
\label{graph}
\centering
\subfigure[$C^{1,n}_{5n}$ with $n=10$]{\includegraphics[width=5cm]{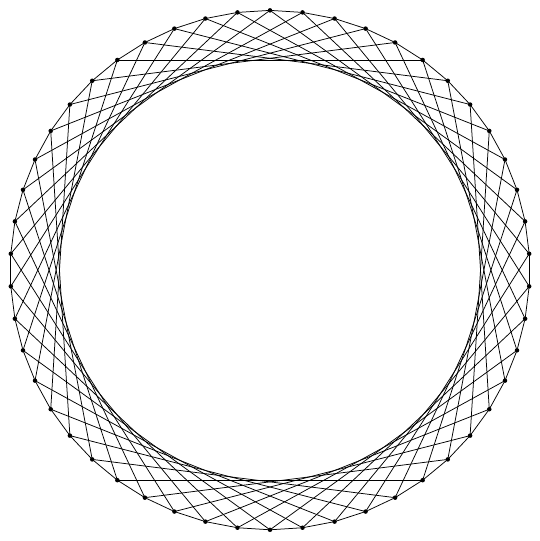}}
\hspace{2cm}
\subfigure[$C^{1,3n,4n}_{12n}$ with $n=4$]{\includegraphics[width=5cm]{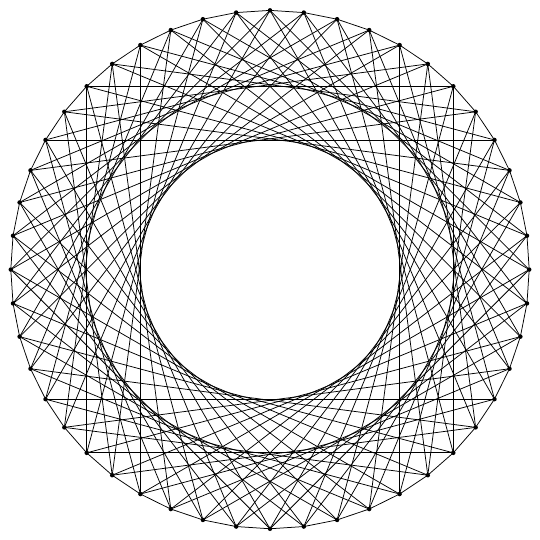}}
\caption{Examples of circulant graphs.}
\end{figure}
\par\vspace{\baselineskip}
\noindent
\textbf{Acknowledgements:} The author thanks Anders Karlsson for encouraging and helpful discussions and support.
\section{Spanning trees in circulant graphs with non-fixed generators}
\label{th}
Since the circulant graph $C^{1,\gamma_1n,\ldots,\gamma_{d-1}n}_{\beta n}$ is the Cayley graph of the group $\mathbb{Z}/\beta n\mathbb{Z}$, the eigenvectors of the Laplacian are given by the characters
\begin{equation*}
 \chi_k(x)=e^{2\pi ikx/(\beta n)},\quad k=0,1,\ldots,\beta n-1.
\end{equation*}
Therefore the eigenvalues are given by
\begin{equation}
\label{ev}
\lambda_k=2d-2\cos(2\pi k/(\beta n))-2\sum_{m=1}^{d-1}\cos(2\pi k\gamma_m/\beta),\quad k=0,1,\ldots,\beta n-1.
\end{equation}
\begin{theorem}
\label{ThCirc}
Let $1\leqslant\gamma_1\leqslant\ldots\leqslant\gamma_{d-1}\leqslant\lfloor\beta/2\rfloor$ be positive integers and $\mu_k=2(d-1)-2\sum_{m=1}^{d-1}\cos(2\pi k\gamma_m/\beta)$, $k=1,\ldots,\beta-1$, be the non-zero eigenvalues of the Laplacian on the circulant graph $C^{\gamma_1,\ldots,\gamma_{d-1}}_\beta$. For all $n\in\mathbb{N}_{\geqslant1}$, the number of spanning trees in the circulant graph $C^{1,\gamma_1n,\ldots,\gamma_{d-1}n}_{\beta n}$ is given by
\begin{equation*}
\tau(C^{1,\gamma_1n,\ldots,\gamma_{d-1}n}_{\beta n})=\frac{n}{\beta}\prod_{k=1}^{\beta-1}\big((\mu_k/2+1+\sqrt{\mu_k^2/4+\mu_k})^n+(\mu_k/2+1-\sqrt{\mu_k^2/4+\mu_k})^n-2\cos(2\pi k/\beta)\big).
\end{equation*}
\end{theorem}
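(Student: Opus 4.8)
The plan is to start from the matrix tree theorem \eqref{kirchhoff}, which together with the connectedness of $C^{1,\gamma_1n,\ldots,\gamma_{d-1}n}_{\beta n}$ (it contains the Hamiltonian cycle generated by $1$, so $\lambda_0=0$ is the only vanishing eigenvalue) gives
\[
\tau(C^{1,\gamma_1n,\ldots,\gamma_{d-1}n}_{\beta n})=\frac{1}{\beta n}\prod_{k=1}^{\beta n-1}\lambda_k ,
\]
with the $\lambda_k$ of \eqref{ev}. The crucial observation is that the sum $\sum_{m=1}^{d-1}\cos(2\pi k\gamma_m/\beta)$ is periodic in $k$ of period $\beta$, hence depends only on $r\defeq k\bmod\beta$. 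Writing $k=r+\beta t$ with $t\in\{0,\ldots,n-1\}$, a short computation turns \eqref{ev} into $\lambda_k=\mu_r+2-2\cos\!\big(\tfrac{2\pi r}{\beta n}+\tfrac{2\pi t}{n}\big)$. This lets me regroup the single product over $k\in\{1,\ldots,\beta n-1\}$ as a double product over residues $r\in\{0,\ldots,\beta-1\}$ and shifts $t\in\{0,\ldots,n-1\}$, omitting only the pair $(r,t)=(0,0)$.

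The heart of the argument, and the step I expect to be the main obstacle, is the product identity: for $a=\mu_r+2$ and $\theta=2\pi r/(\beta n)$,
\[
\prod_{t=0}^{n-1}\big(a-2\cos(\theta+2\pi t/n)\big)=z_+^n+z_-^n-2\cos(n\theta),
\]
where $z_\pm=\tfrac{a}{2}\pm\sqrt{a^2/4-1}$ are the roots of $z^2-az+1=0$. To prove it I would set $z_t=e^{i\theta}\omega^t$ with $\omega=e^{2\pi i/n}$, write $2\cos(\theta+2\pi t/n)=z_t+z_t^{-1}$, factor $a-z_t-z_t^{-1}=-z_t^{-1}(z_t-z_+)(z_t-z_-)$, and evaluate the resulting products over $t$ by means of $\prod_{t=0}^{n-1}(x-\omega^t)=x^n-1$. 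Collecting terms, using $z_+z_-=1$ and $n\theta=2\pi r/\beta$, yields the displayed right-hand side. Substituting $a=\mu_r+2$ gives $z_\pm=\mu_r/2+1\pm\sqrt{\mu_r^2/4+\mu_r}$ and $2\cos(n\theta)=2\cos(2\pi r/\beta)$, which is exactly the factor appearing in the theorem.

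Finally I would treat the residue $r=0$ separately, since there the pair $(0,0)$ must be excluded (it gives the zero eigenvalue). As $\mu_0=0$, we have $a=2$, the missing $t=0$ term is $2-2\cos0=0$, and the remaining product is the classical evaluation $\prod_{t=1}^{n-1}(2-2\cos(2\pi t/n))=n^2$, which follows from $\prod_{t=1}^{n-1}(1-\omega^t)=n$. Multiplying this $r=0$ contribution of $n^2$ by the product over $r=1,\ldots,\beta-1$ of the factors above and dividing by $\beta n$ as prescribed leaves $\frac{n}{\beta}$ times the asserted product, completing the proof.
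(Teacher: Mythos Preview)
Your argument is correct and follows essentially the same route as the paper: matrix tree theorem, reindex $k=r+\beta t$ using the $\beta$-periodicity, split off the $r=0$ block via $\prod_{t=1}^{n-1}(2-2\cos(2\pi t/n))=n^2$, and collapse the inner product over $t$ by factoring each term through the roots of $z^2-az+1$. The only cosmetic difference is that the paper phrases this last step hyperbolically, writing $a=2\cosh\theta$ (so $z_\pm=e^{\pm\theta}$) and obtaining $2\cosh(n\theta)-2\cos\omega$ before converting back via $\argcosh x=\log(x+\sqrt{x^2-1})$, whereas you work directly with the algebraic roots $z_\pm=\mu_r/2+1\pm\sqrt{\mu_r^2/4+\mu_r}$ and land on the final form without that detour.
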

\begin{remark}
It would be interesting to see if this pattern appears in other types of graphs, that is, the number of spanning trees could be expressed in terms of the eigenvalues of the Laplacian on a subgraph of the original graph.
\end{remark}
\begin{proof}
Applying the matrix tree theorem (\ref{kirchhoff}) to the graph $C^{1,\gamma_1n,\ldots,\gamma_{d-1}n}_{\beta n}$, with eigenvalues given by (\ref{ev}), gives
\begin{equation*}
\tau(C^{1,\gamma_1n,\ldots,\gamma_{d-1}n}_{\beta n})=\frac{1}{\beta n}\prod_{k=1}^{\beta n-1}\bigg(2d-2\cos(2\pi k/(\beta n))-2\sum_{m=1}^{d-1}\cos(2\pi k\gamma_m/\beta)\bigg).
\end{equation*}
Since there are $n$ spanning trees in the cycle $C^1_n$, that is,
\begin{equation}
\label{cycle}
n=\tau(C^1_n)=\frac{1}{n}\prod_{k=1}^{n-1}(2-2\cos(2\pi k/n)),
\end{equation}
it follows that
\begin{align}
&\tau(C^{1,\gamma_1n,\ldots,\gamma_{d-1}n}_{\beta n})=\frac{n}{\beta}\prod_{\substack{k=1\\ \beta\nmid k}}^{\beta n-1}\bigg(2d-2\cos(2\pi k/(\beta n))-2\sum_{m=1}^{d-1}\cos(2\pi k\gamma_m/\beta)\bigg)\nonumber\\
&=\frac{n}{\beta}\prod_{k=1}^{\beta-1}\prod_{l=0}^{n-1}\bigg(2d-2\cos(2\pi(k+l\beta)/(\beta n))-2\sum_{m=1}^{d-1}\cos(2\pi(k+l\beta)\gamma_m/\beta)\bigg)\nonumber\\
&=\frac{n}{\beta}\prod_{k=1}^{\beta-1}\prod_{l=0}^{n-1}\bigg(2\cosh(\argcosh(d-\sum_{m=1}^{d-1}\cos(2\pi k\gamma_m/\beta)))-2\cos(2\pi k/(\beta n)+2\pi l/n)\bigg).
\label{cosh}
\end{align}
We now evaluate the product over $l$ by the following calculation
\begin{align}
\prod_{l=0}^{n-1}(2\cosh\theta-2\cos((\omega+2\pi l)/n))&=e^{-n\theta}\prod_{l=0}^{n-1}(e^{2\theta}-2\cos((\omega+2\pi l)/n)e^\theta+1)\nonumber\\
&=e^{-n\theta}\prod_{l=0}^{n-1}(e^\theta-e^{i(\omega+2\pi l)/n})(e^\theta-e^{-i(\omega+2\pi l)/n}).
\label{prod}
\end{align}
The complex numbers $e^{i(\omega+2\pi l)/n}$ and $e^{-i(\omega+2\pi l)/n}$ for $l=0,1,\ldots,n-1$ are the $2n$ roots of the following polynomial in $e^\theta$
\begin{equation*}
e^{2n\theta}-2e^{n\theta}\cos\omega+1=0.
\end{equation*}
Therefore the product (\ref{prod}) is equal to
\begin{equation*}
e^{-n\theta}(e^{2n\theta}-2e^{n\theta}\cos\omega+1)=2\cosh(n\theta)-2\cos\omega.
\end{equation*}
Using this relation in (\ref{cosh}) with $\theta=\argcosh(d-\sum_{m=1}^{d-1}\cos(2\pi k\gamma_m/\beta))$ and $\omega=2\pi k/\beta$, we have
\begin{equation}
\label{tauCirc}
\tau(C^{1,\gamma_1n,\ldots,\gamma_{d-1}n}_{\beta n})=\frac{n}{\beta}\prod_{k=1}^{\beta-1}\bigg(2\cosh(n\argcosh(d-\sum_{m=1}^{d-1}\cos(2\pi k\gamma_m/\beta)))-2\cos(2\pi k/\beta)\bigg).
\end{equation}
The theorem then follows by expressing the formula in terms of the eigenvalues on $C^{\gamma_1,\ldots,\gamma_{d-1}}_\beta$ and from the relation $\argcosh{x}=\log(x+\sqrt{x^2-1})$ for $x\geqslant1$.
\end{proof}
\begin{example}
This formula reproves Theorems $4$, $5$, $6$, $8$ and corrects a typographical error in Theorem $7$ in \cite{zhang2005chebyshev}. For example, \cite[Theorem $5$]{zhang2005chebyshev} states that
\begin{equation*}
\tau(C^{1,n}_{3n})=\frac{n}{3}\left[(\sqrt{7/4}+\sqrt{3/4})^{2n}+(\sqrt{7/4}-\sqrt{3/4})^{2n}+1\right]^2
\end{equation*}
which is a particular case of the formula with $d=2$, $\gamma_1=1$, $\beta=3$ and $\mu_k=2-2\cos(2\pi k/3)$, $k=1,2$, being the non-zero eigenvalues on the cycle $C^1_3$. As another example, \cite[Theorem $8$]{zhang2005chebyshev} states that
\begin{align*}
\tau(C^{1,2n,3n}_{6n})&=\frac{n}{6}\left[(\sqrt{11/4}+\sqrt{7/4})^{2n}+(\sqrt{11/4}-\sqrt{7/4})^{2n}-1\right]^2\left[(\sqrt{2}+1)^n+(\sqrt{2}-1)^n\right]^2\\
&\ \ \ \times\left[(\sqrt{7/4}+\sqrt{3/4})^{2n}+(\sqrt{7/4}-\sqrt{3/4})^{2n}+1\right]^2
\end{align*}
which is a particular case with $d=3$, $\gamma_1=2$, $\gamma_2=3$, $\beta=6$ and $\mu_k=4-2\cos(2\pi k/3)-2\cos(\pi k)$, $k=1,\ldots,5$, being the non-zero eigenvalues on the circulant graph $C^{2,3}_6$.
\end{example}
\begin{remark}
We emphasize that the circulant graph $C^{1,\gamma_1n,\ldots,\gamma_{d-1}n}_{\beta n}$ consists of $n$ copies of $C^{\gamma_1,\ldots,\gamma_{d-1}}_\beta$ which are embedded in the cycle $C^1_{\beta n}$. This explains the eigenvalues on $C^{\gamma_1,\ldots,\gamma_{d-1}}_\beta$ appearing in the formula. 
\end{remark}
\section{Spanning trees in discrete tori}
In this section we establish a formula for the number of spanning trees in the discrete torus $\mathbb{Z}^d/\Lambda\mathbb{Z}^d$ where $\Lambda=\textnormal{diag}(\alpha_1,\ldots,\alpha_{d-1},n)$ is a diagonal matrix with positive integer coefficients. The eigenvalues of the Laplacian on $\mathbb{Z}^d/\Lambda\mathbb{Z}^d$ are given by
\begin{align*}
\{\lambda_k\}_{k=0,1,\ldots,\det(\Lambda)-1}=\{2d-2\sum_{i=1}^{d-1}\cos(2\pi &k_i/\alpha_i)-2\cos(2\pi k'/n):\\
&0\leqslant k_i\leqslant\alpha_i-1,i=1,\ldots,d-1\textnormal{ and }0\leqslant k'\leqslant n-1\}.
\end{align*}
The formula given in the following theorem is interesting when $n$ is larger than $\det(A)$. It improves the asymptotic result given in \cite[Example $4.4.3$]{louis2013asymptotics}.
\begin{theorem}
Let $A=\textnormal{diag}(\alpha_1,\ldots,\alpha_{d-1})$ and $\{\mu_k\}_k=\{2(d-1)-2\sum_{i=1}^{d-1}\cos(2\pi k_i/\alpha_i):0\leqslant k_i\leqslant\alpha_i-1,i=1,\ldots,d-1\}$, $k=1,\ldots,\det(A)-1$, be the non-zero eigenvalues of the Laplacian on $\mathbb{Z}^{d-1}/A\mathbb{Z}^{d-1}$. For all $n\in\mathbb{N}_{n\geqslant1}$, the number of spanning trees in the discrete torus $\mathbb{Z}^d/\Lambda\mathbb{Z}^d$ is given by
\begin{equation*}
\tau(\mathbb{Z}^d/\Lambda\mathbb{Z}^d)=\frac{n}{\det(A)}\prod_{k=1}^{\det(A)-1}\left(\big(\mu_k/2+1+\sqrt{\mu_k^2/4+\mu_k}\big)^n+\big(\mu_k/2+1-\sqrt{\mu_k^2/4+\mu_k}\big)^n-2\right).
\end{equation*}
\end{theorem}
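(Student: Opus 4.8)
The plan is to mirror the proof of Theorem \ref{ThCirc} almost verbatim, the only structural change being that the cycle coordinate now decouples from the remaining coordinates with zero phase. First I would apply the matrix tree theorem (\ref{kirchhoff}), noting that the torus has $\det(\Lambda)=n\det(A)$ vertices, to write $\tau(\mathbb{Z}^d/\Lambda\mathbb{Z}^d)$ as $\frac{1}{n\det(A)}$ times the product of the non-zero eigenvalues $2d-2\sum_{i=1}^{d-1}\cos(2\pi k_i/\alpha_i)-2\cos(2\pi k'/n)$ taken over all multi-indices $(k_1,\ldots,k_{d-1},k')\neq 0$.

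Next I would regroup this product according to the value of the first $d-1$ coordinates. For fixed $(k_1,\ldots,k_{d-1})$ set $\mu=2(d-1)-2\sum_{i=1}^{d-1}\cos(2\pi k_i/\alpha_i)$, so that each eigenvalue takes the form $2\cosh\theta-2\cos(2\pi k'/n)$ with $\cosh\theta=\mu/2+1$, that is $\theta=\argcosh(\mu/2+1)$. The indices with $(k_1,\ldots,k_{d-1})=0$ give $\mu=0$; letting $k'$ run over $1,\ldots,n-1$ (and omitting $k'=0$, which yields the single zero eigenvalue) contributes the cycle factor $\prod_{k'=1}^{n-1}(2-2\cos(2\pi k'/n))=n^2$ by (\ref{cycle}). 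For every non-zero $\mu_k$ I would instead let $k'$ run over the full range $0,\ldots,n-1$.

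The heart of the argument is then to invoke the product identity derived in the proof of Theorem \ref{ThCirc}, namely $\prod_{l=0}^{n-1}(2\cosh\theta-2\cos((\omega+2\pi l)/n))=2\cosh(n\theta)-2\cos\omega$. The crucial difference with the circulant case is that here the last coordinate contributes $-2\cos(2\pi k'/n)$ with no phase offset, so I would apply the identity with $\omega=0$, giving $\prod_{k'=0}^{n-1}(2\cosh\theta_k-2\cos(2\pi k'/n))=2\cosh(n\theta_k)-2$ for each non-zero $\mu_k$. This is precisely why the torus formula ends in $-2$ rather than the $-2\cos(2\pi k/\beta)$ of Theorem \ref{ThCirc}. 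Collecting the factors yields $\tau=\frac{n^2}{n\det(A)}\prod_{k=1}^{\det(A)-1}(2\cosh(n\theta_k)-2)=\frac{n}{\det(A)}\prod_{k=1}^{\det(A)-1}(2\cosh(n\theta_k)-2)$, where the product ranges over the $\det(A)-1$ non-zero eigenvalues of $\mathbb{Z}^{d-1}/A\mathbb{Z}^{d-1}$.

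Finally I would use $\argcosh x=\log(x+\sqrt{x^2-1})$ to get $e^{\pm\theta_k}=\mu_k/2+1\pm\sqrt{\mu_k^2/4+\mu_k}$, so that $2\cosh(n\theta_k)=(\mu_k/2+1+\sqrt{\mu_k^2/4+\mu_k})^n+(\mu_k/2+1-\sqrt{\mu_k^2/4+\mu_k})^n$, which reproduces the claimed formula. I expect no serious obstacle: the computational machinery is entirely inherited from Theorem \ref{ThCirc}, and the only point requiring care is the bookkeeping that isolates the $\mu=0$ block and correctly identifies $\omega=0$ for the decoupled cycle direction.
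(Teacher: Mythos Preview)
Your proposal is correct and follows essentially the same route as the paper: apply the matrix tree theorem, split off the $(k_1,\ldots,k_{d-1})=0$ block via the cycle identity (\ref{cycle}) to produce the prefactor $n/\det(A)$, then collapse each remaining inner product using the identity $\prod_{k'=0}^{n-1}(2\cosh\theta-2\cos(2\pi k'/n))=2\cosh(n\theta)-2$ (the $\omega=0$ case of the computation in Theorem \ref{ThCirc}), and finally rewrite $2\cosh(n\theta_k)$ via $\argcosh x=\log(x+\sqrt{x^2-1})$. Your explicit remark that the absence of a phase offset is what turns the $-2\cos(2\pi k/\beta)$ into $-2$ is exactly the point the paper makes implicitly.
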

\begin{proof}
From the matrix tree theorem, we have
\begin{align*}
\tau(\mathbb{Z}^d/\Lambda\mathbb{Z}^d)&=\frac{1}{\det(A)n}\prod_{i=1}^{d-1}\prod_{\substack{k_i=0\\ \mathclap{(k_1,\ldots,k_{d-1},k')\neq0}}}^{\alpha_i-1}\prod_{k'=0}^{n-1}\left(2d-2\sum_{i=1}^{d-1}\cos(2\pi k_i/\alpha_i)-2\cos(2\pi k'/n)\right)\\
&=\frac{n}{\det(A)}\prod_{i=1}^{d-1}\prod_{\substack{k_i=0\\ \mathclap{(k_1,\ldots,k_{d-1})\neq0}}}^{\alpha_i-1}\prod_{k'=0}^{n-1}\left(2\cosh(\argcosh(d-\sum_{i=1}^{d-1}\cos(2\pi k_i/\alpha_i)))-2\cos(2\pi k'/n)\right)\\
&=\frac{n}{\det(A)}\prod_{i=1}^{d-1}\prod_{\substack{k_i=0\\ \mathclap{(k_1,\ldots,k_{d-1})\neq0}}}^{\alpha_i-1}\left(2\cosh(n\argcosh(d-\sum_{i=1}^{d-1}\cos(2\pi k_i/\alpha_i)))-2\right)
\end{align*}
where the second equality comes from equation (\ref{cycle}) and the third equality comes from the same trick as in the proof of Theorem \ref{ThCirc},
\begin{equation*}
\prod_{k=0}^{n-1}\left(2\cosh\theta-2\cos(2\pi k/n)\right)=2\cosh(n\theta)-2.
\end{equation*}
The theorem then follows by expressing the formula in terms of the eigenvalues on $\mathbb{Z}^{d-1}/A\mathbb{Z}^{d-1}$ and from the relation $\argcosh{x}=\log(x+\sqrt{x^2-1})$, for $x\geqslant1$.
\end{proof}
\section{Spanning tree entropy of circulant graphs}
For a sequence of regular graphs $G_n$ with vertex set $V(G_n)$, one can consider the number of spanning trees as a function of $n$. Assuming that the following limit exists
\begin{equation*}
z=\lim_{n\rightarrow\infty}\frac{\log\tau(G_n)}{\lvert V(G_n)\rvert},
\end{equation*}
it is sometimes called the associated tree entropy \cite{MR2160416}. From Theorem \ref{ThCirc}, the tree entropy for the non-fixed generated circulant graph $C^{1,\gamma_1n,\ldots,\gamma_{d-1}n}_{\beta n}$ as $n\rightarrow\infty$, denoted by $z_{NF}(\beta;\gamma_1,\ldots,\gamma_{d-1})$, is given in the following corollary.
\begin{corollary}
Let $1\leqslant\gamma_1\leqslant\ldots\leqslant\gamma_{d-1}\leqslant\lfloor\beta/2\rfloor$ and $\beta$ be positive integers. The tree entropy of the circulant graph $C^{1,\gamma_1n,\ldots,\gamma_{d-1}n}_{\beta n}$ as $n\rightarrow\infty$ is given by
\begin{align*}
z_{NF}(\beta;\gamma_1,\ldots,\gamma_{d-1})&=\frac{1}{\beta}\sum_{k=1}^{\beta-1}\argcosh(d-\sum_{m=1}^{d-1}\cos(2\pi k\gamma_m/\beta))\\
&=\int_0^\infty(e^{-t}-\frac{1}{\beta}\sum_{k=0}^{\beta-1}e^{-\mu_kt}e^{-2t}I_0(2t))\frac{dt}{t}
\end{align*}
where $\mu_k=2(d-1)-2\sum_{m=1}^{d-1}\cos(2\pi k\gamma_m/\beta)$, $k=0,1,\ldots,\beta-1$, are the eigenvalues of the Laplacian on the circulant graph $C^{\gamma_1,\ldots,\gamma_{d-1}}_\beta$, and $I_0$ the modified $I$-Bessel function of order zero.
\end{corollary}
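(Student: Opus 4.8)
The plan is to obtain the two expressions separately: the closed sum directly from Theorem \ref{ThCirc} by a short asymptotic analysis, and the integral by rewriting each $\argcosh$ term as a Mahler measure and applying Frullani's formula.

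For the first equality I would start from the closed product (\ref{tauCirc}). Writing $\theta_k=\argcosh(d-\sum_{m=1}^{d-1}\cos(2\pi k\gamma_m/\beta))=\argcosh(\mu_k/2+1)$, the general factor is $2\cosh(n\theta_k)-2\cos(2\pi k/\beta)=e^{n\theta_k}+e^{-n\theta_k}-2\cos(2\pi k/\beta)$, which is positive since $\cos(2\pi k/\beta)<1$ for $1\leqslant k\leqslant\beta-1$. Taking logarithms in (\ref{tauCirc}) and dividing by $\lvert V\rvert=\beta n$ gives
\begin{equation*}
\frac{\log\tau(C^{1,\gamma_1n,\ldots,\gamma_{d-1}n}_{\beta n})}{\beta n}=\frac{\log(n/\beta)}{\beta n}+\frac{1}{\beta n}\sum_{k=1}^{\beta-1}\log\big(e^{n\theta_k}+e^{-n\theta_k}-2\cos(2\pi k/\beta)\big).
\end{equation*}
For each $k$ with $\theta_k>0$ the factor is $e^{n\theta_k}(1+o(1))$, so its logarithm is $n\theta_k+o(1)$; the prefactor term tends to $0$, and letting $n\to\infty$ yields $z_{NF}=\tfrac1\beta\sum_{k=1}^{\beta-1}\theta_k$. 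Any $k$ with $\theta_k=0$ contributes a constant factor whose logarithm, divided by $n$, vanishes, so the formula holds in all cases.

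For the second equality I would convert each $\theta_k$ into an integral. The key identity is the Mahler-type formula
\begin{equation*}
\int_0^1\log(c-2\cos(2\pi x))\,dx=\log\frac{c+\sqrt{c^2-4}}{2}=\argcosh(c/2),\qquad c\geqslant2,
\end{equation*}
proved by factoring $c-2\cos(2\pi x)$ over the unit circle and using $\int_0^1\log\lvert e^{2\pi ix}-r\rvert\,dx=\log^+\!\lvert r\rvert$. Applying it with $c=\mu_k+2$ gives $\theta_k=\int_0^1\log(\mu_k+2-2\cos(2\pi x))\,dx$. I would then insert Frullani's representation $\log a=\int_0^\infty(e^{-t}-e^{-at})\,dt/t$ with $a=\mu_k+2-2\cos(2\pi x)$, interchange the $x$- and $t$-integrations, and use $\int_0^1 e^{2t\cos(2\pi x)}\,dx=I_0(2t)$ to obtain
\begin{equation*}
\theta_k=\int_0^\infty\Big(e^{-t}-e^{-\mu_k t}e^{-2t}I_0(2t)\Big)\frac{dt}{t}.
\end{equation*}
Summing over $k=0,\ldots,\beta-1$ (the term $k=0$ has $\theta_0=0$ and adds nothing to the left-hand side) and dividing by $\beta$ produces exactly the claimed integral.

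The main obstacle is justifying the interchange of the two integrals, since $e^{-t}-e^{-at}$ has no definite sign and Tonelli does not apply directly. I would settle this by verifying absolute integrability of the combined integrand on $(0,\infty)\times(0,1)$: from $e^{-t}-e^{-at}=t\int_1^a e^{-st}\,ds$ one gets $\lvert e^{-t}-e^{-at}\rvert/t\leqslant\lvert a-1\rvert\,e^{-\min(1,a)t}$, and since $a=\mu_k+2-2\cos(2\pi x)$ stays in the bounded interval $[\mu_k,\mu_k+4]$ this dominates uniformly and is integrable in $t$ and $x$, so Fubini is legitimate. As a more conceptual alternative I would note that the integral is the $n\to\infty$ limit of $\tfrac{1}{\beta n}\log\textnormal{det}^\ast\Delta$ written through Frullani, where $\tfrac{1}{\beta n}\operatorname{Tr}e^{-t\Delta}$ converges — by splitting the index as $k=j\bmod\beta$ and a Riemann-sum argument — to $\tfrac1\beta\sum_{k=0}^{\beta-1}e^{-\mu_k t}e^{-2t}I_0(2t)$, the heat kernel of $\mathbb{Z}$ weighted by the spectrum of $C^{\gamma_1,\ldots,\gamma_{d-1}}_\beta$; this is what makes $I_0$ appear, at the cost of a dominated-convergence justification for the limit in $n$.
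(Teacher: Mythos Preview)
Your argument for the first equality is exactly the paper's: take logarithms in (\ref{tauCirc}), divide by $\beta n$, and read off the limit. For the second equality the paper does not compute anything; it simply invokes \cite[Proposition~2.4]{louis2013asymptotics}, which states that $\int_0^\infty(e^{-t}-e^{-xt}I_0(2t))\,dt/t=\argcosh(x/2)$ for $x\geqslant2$, and applies it with $x=\mu_k+2$. You instead rederive this identity from scratch via the Mahler-measure evaluation of $\int_0^1\log(c-2\cos2\pi x)\,dx$ combined with Frullani's integral; this is a genuinely different route that makes the proof self-contained and shows transparently why $I_0$ appears (as $\int_0^1 e^{2t\cos2\pi x}\,dx$). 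One small caveat: your Fubini bound $\lvert a-1\rvert\,e^{-\min(1,a)t}$ gives a uniformly integrable majorant only when $\mu_k>0$, since otherwise $a=\mu_k+2-2\cos2\pi x$ can vanish and $\int_0^\infty e^{-at}\,dt$ blows up; and $\mu_k=0$ can occur for $k\neq0$ whenever $C^{\gamma_1,\ldots,\gamma_{d-1}}_\beta$ is disconnected. For those indices you should either note directly that both sides equal zero, or replace the crude bound at large $t$ by the estimate $\int_0^1 e^{-(2-2\cos2\pi x)t}\,dx=e^{-2t}I_0(2t)=O(t^{-1/2})$, which restores integrability.
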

\begin{proof}
From equation (\ref{tauCirc}), the asymptotic number of spanning trees in $C^{1,\gamma_1n,\ldots,\gamma_{d-1}n}_{\beta n}$ is given by
\begin{equation*}
\tau(C^{1,\gamma_1n,\ldots,\gamma_{d-1}n}_{\beta n})=\frac{n}{\beta}e^{n\sum_{k=1}^{\beta-1}\argcosh(d-\sum_{m=1}^{d-1}\cos(2\pi k\gamma_m/\beta))+o(1)}\quad\textrm{as }n\rightarrow\infty.
\end{equation*}
This shows the first equality. The second equality comes from \cite[Proposition $2.4$]{louis2013asymptotics} which expresses the $\argcosh$ in terms of an integral of modified $I$-Bessel function: for all $x\geqslant2$,
\begin{equation*}
\int_0^\infty(e^{-t}-e^{-xt}I_0(2t))\frac{dt}{t}=\argcosh(x/2).
\end{equation*}
\end{proof}
As mentioned in section \ref{th} the circulant graph $C^{1,\gamma_1n,\ldots,\gamma_{d-1}n}_{\beta n}$ consists of $n$ copies of $C^{\gamma_1,\ldots,\gamma_{d-1}}_\beta$ which are embedded in the cycle $C^1_{\beta n}$. This structure is reflected by the appearance of the term $\theta_{C^{\gamma_1,\ldots,\gamma_{d-1}}_\beta}(t)e^{-2t}I_0(2t)$ in the asymptotic formula, where $\theta_{C^{\gamma_1,\ldots,\gamma_{d-1}}_\beta}(t)=\sum_{k=0}^{\beta-1}e^{-\mu_kt}$ is the theta function on $C^{\gamma_1,\ldots,\gamma_{d-1}}_\beta$ and $e^{-2t}I_0(2t)$ is the typical term appearing in the asymptotics of the number of spanning trees in the cycle. Indeed, the tree entropy on the cycle is (see section $3.2$ in \cite{louis2013asymptotics})
\begin{equation*}
z_{cycle}=\int_0^\infty(e^{-t}-e^{-2t}I_0(2t))\frac{dt}{t}=0.
\end{equation*}

Consider the sequence of circulant graphs $C^{1,n,\gamma_1n,\ldots,\gamma_{d-1}n}_{\beta n}$ when $n\rightarrow\infty$ with $z_{NF}(\beta;1,\gamma_1,\ldots,\allowbreak\gamma_{d-1})$ denoting the corresponding tree entropy. In the following proposition we show that it is greater than the one of fixed generated circulant graphs.
\begin{proposition}
For all positive integers $\gamma_1,\ldots,\gamma_d$, there exists an integer $B\geqslant2$ such that for all $\beta\geqslant B$,
\begin{equation*}
z_{NF}(\beta;1,\gamma_1,\ldots,\gamma_{d-1})>z_F(1,\gamma_1,\ldots,\gamma_d)
\end{equation*}
where $z_F(1,\gamma_1,\ldots,\gamma_d)$ is the tree entropy of the fixed generated circulant graph $C^{1,\gamma_1,\ldots,\gamma_d}_n$.
\end{proposition}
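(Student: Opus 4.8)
The plan is to pass to the limit $\beta\to\infty$ in the formula of the preceding corollary, identify that limit $L_{NF}$ explicitly, and then compare $L_{NF}$ with $z_F$ through a single auxiliary integral whose sign is visible by Fourier analysis. Write $\mu(x)=2d-2\cos(2\pi x)-2\sum_{m=1}^{d-1}\cos(2\pi x\gamma_m)$ for the limiting symbol of the Laplacian on the subgraph $C^{1,\gamma_1,\ldots,\gamma_{d-1}}_\beta$, and $\lambda_F(x)=2(d+1)-2\cos(2\pi x)-2\sum_{m=1}^{d}\cos(2\pi x\gamma_m)$ for the symbol of the fixed graph $C^{1,\gamma_1,\ldots,\gamma_d}_n$; the key algebraic link is $\lambda_F(x)=\mu(x)+2-2\cos(2\pi x\gamma_d)$. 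The corollary gives $z_{NF}(\beta;1,\gamma_1,\ldots,\gamma_{d-1})=\frac1\beta\sum_{k=0}^{\beta-1}\argcosh(\mu_k/2+1)$ as a Riemann sum, where $\mu_k$ are the eigenvalues of $C^{1,\gamma_1,\ldots,\gamma_{d-1}}_\beta$ (the $k=0$ term vanishes since $\argcosh 1=0$). As $\mu(x)\ge0$ with $\mu(0)=0$, the integrand $x\mapsto\argcosh(\mu(x)/2+1)$ is continuous on $[0,1]$, so $z_{NF}(\beta;1,\gamma_1,\ldots,\gamma_{d-1})\to L_{NF}:=\int_0^1\argcosh(\mu(x)/2+1)\,dx$. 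The matrix tree theorem for $C^{1,\gamma_1,\ldots,\gamma_d}_n$ gives, in the same way, $z_F(1,\gamma_1,\ldots,\gamma_d)=\int_0^1\log\lambda_F(x)\,dx$ (integrable because $\lambda_F$ vanishes only to second order at $x=0$). It therefore suffices to prove the strict inequality $L_{NF}>z_F$: once it holds, any $B$ with $|z_{NF}(\beta;1,\gamma_1,\ldots,\gamma_{d-1})-L_{NF}|<L_{NF}-z_F$ for $\beta\ge B$ settles the proposition.

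Next I would write both constants as integrals in an auxiliary variable $t$ and subtract. For $L_{NF}$ I use the identity $\argcosh(y/2)=\int_0^\infty(e^{-t}-e^{-yt}I_0(2t))\frac{dt}{t}$ ($y\ge2$) recalled in the corollary's proof, with $y=\mu(x)+2$; for $z_F$ I use the Frullani identity $\log a=\int_0^\infty(e^{-t}-e^{-at})\frac{dt}{t}$. After Fubini the (individually divergent) $e^{-t}$ contributions cancel, and using $\lambda_F(x)=\mu(x)+2-2\cos(2\pi x\gamma_d)$ together with $\int_0^1 e^{2t\cos(2\pi x\gamma_d)}\,dx=I_0(2t)$ (valid since $\gamma_d\in\mathbb{N}$) I obtain
\[
L_{NF}-z_F=\int_0^\infty\frac{B(t)}{t}\,dt,\qquad B(t)=\int_0^1 e^{-(\mu(x)+2)t}\big(e^{2t\cos(2\pi x\gamma_d)}-I_0(2t)\big)\,dx.
\]
Setting $w_t(x)=e^{-(\mu(x)+2)t}$ and $g_t(x)=e^{2t\cos(2\pi x\gamma_d)}$, and noting $I_0(2t)=\int_0^1 g_t\,dx$, the inner quantity is exactly a covariance against Lebesgue measure on $[0,1]$, namely $B(t)=\int_0^1 w_t g_t\,dx-\big(\int_0^1 w_t\,dx\big)\big(\int_0^1 g_t\,dx\big)$.

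The heart of the argument is to show $B(t)>0$ for every $t>0$. By Parseval, $B(t)=\sum_{n\ne0}\hat w_t(n)\,\overline{\hat g_t(n)}$ with $\hat f(n)=\int_0^1 f(x)e^{-2\pi inx}\,dx$. Since $w_t(x)=e^{-2(d+1)t}\,e^{2t\cos2\pi x}\prod_{m=1}^{d-1}e^{2t\cos(2\pi\gamma_m x)}$ and each factor $e^{2t\cos(2\pi\gamma x)}=\sum_{l}I_l(2t)e^{2\pi il\gamma x}$ has non-negative Fourier coefficients ($I_l(2t)>0$), their product does too, so $\hat w_t(n)\ge0$ for all $n$. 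On the other hand $g_t$ is supported in frequency on the multiples of $\gamma_d$, with $\hat g_t(\gamma_d j)=I_{|j|}(2t)>0$. Hence $B(t)=\sum_{j\ne0}\hat w_t(\gamma_d j)\,I_{|j|}(2t)\ge0$, and it is strict: taking frequency $\gamma_d$ from the $e^{2t\cos2\pi x}$ factor and $0$ from the rest already gives $\hat w_t(\gamma_d)\ge e^{-2(d+1)t}I_{\gamma_d}(2t)I_0(2t)^{d-1}>0$. Therefore $L_{NF}-z_F=\int_0^\infty B(t)t^{-1}\,dt>0$.

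The main obstacle is precisely this positivity step; the rest is bookkeeping. The Riemann-sum limits, the integrability of the symbols at $x=0$, and the Fubini interchange with the cancellation of the $e^{-t}$ terms are routine, justified by $\lambda_F(x),\,\mu(x)+2-2\sim\mathrm{const}\cdot x^2$ near $x=0$ and $I_0(2t)\sim e^{2t}/\sqrt{4\pi t}$ as $t\to\infty$. The genuine content is recognizing $B(t)$ as a covariance and observing that the Fourier-positivity of $w_t$ (an exponential of a sum of cosines) meshes with the $\gamma_d$-periodic, positive spectrum of $g_t$; this is exactly what forces the fixed-generator entropy to be the smaller of the two, and what makes the comparison hold for all sufficiently large $\beta$.
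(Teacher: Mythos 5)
Your proposal is correct, and despite the different dress it is at its core the paper's own argument: both proofs pass to the limit $\beta\to\infty$ in the corollary and reduce the proposition to the positivity, for each $t>0$, of one and the same quantity. Indeed your covariance is exactly
\[
B(t)=e^{-2(d+1)t}\Bigl(I_0^{1,\gamma_1,\ldots,\gamma_d}(2t,\ldots,2t)-I_0(2t)\,I_0^{1,\gamma_1,\ldots,\gamma_{d-1}}(2t,\ldots,2t)\Bigr),
\]
which is the difference of integrands the paper compares, and your Parseval step is the paper's series expansion of the $d$-dimensional Bessel function read in Fourier language: the product $\hat w_t(\gamma_d j)\,I_{|j|}(2t)$ collects (up to the factor $e^{-2(d+1)t}$, and using $I_{-k}=I_k$) precisely the terms with $k_d=j$ in $\sum_{(k_1,\ldots,k_d)\in\mathbb{Z}^d}I_{\sum_i\gamma_ik_i}(2t)\prod_iI_{k_i}(2t)$, and your strictness bound $\hat w_t(\gamma_d)\geqslant e^{-2(d+1)t}I_{\gamma_d}(2t)I_0(2t)^{d-1}>0$ is the paper's observation that the discarded terms with $k_d\neq0$ are all positive. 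The genuine differences lie in the inputs: the paper imports the Bessel-integral formula for $z_F$ from Theorem 1.1 of the cited companion paper and the series expansion from its Section 2.4, whereas you rebuild both from scratch --- $z_F$ as the Mahler measure $\int_0^1\log\lambda_F(x)\,dx$ combined with Frullani's identity, and the key positivity via the covariance/nonnegative-Fourier-coefficient argument. Your route buys self-containedness and a conceptual reading of the inequality (a positive correlation between two functions with nonnegative, spectrally aligned Fourier coefficients), at the price of the technical steps you rightly flag --- Riemann-sum convergence despite the logarithmic singularity of $\log\lambda_F$ at $x=0$, and the Fubini exchange with cancellation of the divergent $e^{-t}$ terms --- both of which do go through for the reasons you give; the paper's route is shorter because that bookkeeping is already packaged in the results it cites.
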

\begin{proof}
By letting $\beta\rightarrow\infty$ in the corollary, the sum over the Laplacian eigenvalues converges to a Riemann integral, so that
\begin{equation*}
\lim_{\beta\rightarrow\infty}z_{NF}(\beta;1,\gamma_1,\ldots,\gamma_{d-1})=\int_0^\infty(e^{-t}-e^{-2(d+1)t}I_0(2t)I_0^{1,\gamma_1,\ldots,\gamma_{d-1}}(2t,\ldots,2t))\frac{dt}{t}
\end{equation*}
where $I_0^{1,\gamma_1,\ldots,\gamma_{d-1}}$ is the $d$-dimensional modified $I$-Bessel function of order zero defined by (see section $2.4$ in \cite{louis2013asymptotics})
\begin{equation*}
I_0^{1,\gamma_1,\ldots,\gamma_{d-1}}(2t,\ldots,2t)=\frac{1}{2\pi}\int_{-\pi}^{\pi}e^{2t(\cos{w}+\sum_{m=1}^{d-1}\cos(\gamma_m w))}dw.
\end{equation*}
It can be expressed in terms of a series of modified $I$-Bessel functions
\begin{equation*}
I_0^{1,\gamma_1,\ldots,\gamma_{d-1}}(2t,\ldots,2t)=\sum_{(k_1,\ldots,k_{d-1})\in\mathbb{Z}^{d-1}}I_{\sum_{i=1}^{d-1}\gamma_ik_i}(2t)\prod_{i=1}^{d-1}I_{k_i}(2t).
\end{equation*}
On the other hand, from \cite[Theorem $1.1$]{louis2013asymptotics}, the tree entropy of the fixed generated circulant graph $C^{1,\gamma_1,\ldots,\gamma_d}_n$ as $n\rightarrow\infty$ is given by
\begin{equation*}
z_F(1,\gamma_1,\ldots,\gamma_d)=\int_0^\infty(e^{-t}-e^{-2(d+1)t}I_0^{1,\gamma_1,\ldots,\gamma_d}(2t,\ldots,2t))\frac{dt}{t}
\end{equation*}
where
\begin{align*}
I_0^{1,\gamma_1,\ldots,\gamma_d}(2t,\ldots,2t)&=\sum_{(k_1,\ldots,k_d)\in\mathbb{Z}^d}I_{\sum_{i=1}^d\gamma_ik_i}(2t)\prod_{i=1}^dI_{k_i}(2t)\\
&>I_0(2t)\sum_{(k_1,\ldots,k_{d-1})\in\mathbb{Z}^{d-1}}I_{\sum_{i=1}^{d-1}\gamma_ik_i}(2t)\prod_{i=1}^{d-1}I_{k_i}(2t)\\
&=I_0(2t)I_0^{1,\gamma_1,\ldots,\gamma_{d-1}}(2t,\ldots,2t),\quad\forall t>0.
\end{align*}
Therefore
\begin{equation*}
\lim_{\beta\rightarrow\infty}z_{NF}(\beta;1,\gamma_1,\ldots,\gamma_{d-1})>z_F(1,\gamma_1,\ldots,\gamma_d).
\end{equation*}
\end{proof}
Related to this comparison between fixed and non-fixed generated circulant graphs one might wonder, for example in the simplest case of $C^{1,n}_{\beta n}$, how taking limits first in $\beta$ then in $n$ would compare to taking limits first in $n$ then in $\beta$. From \cite[Lemma $5$]{golin2010asymptotic} and by letting $\beta\rightarrow\infty$ in \cite[Theorem $4$]{golin2010asymptotic}, one easily sees that for all positive integers $\gamma_1,\ldots,\gamma_{d-1}$,
\begin{equation*}
\lim_{\gamma_d\rightarrow\infty}\lim_{n\rightarrow\infty}\frac{\log\tau(C^{\gamma_1,\ldots,\gamma_d}_{n})}{n}=\lim_{\beta\rightarrow\infty}\lim_{n\rightarrow\infty}\frac{\log\tau(C^{1,\gamma_1n,\ldots,\gamma_{d-1}n}_{\beta n})}{\beta n}
\end{equation*}
which by definition is
\begin{equation*}
\lim_{\gamma_d\rightarrow\infty}z_F(\gamma_1,\ldots,\gamma_d)=\lim_{\beta\rightarrow\infty}z_{NF}(\beta;\gamma_1,\ldots,\gamma_{d-1}).
\end{equation*}
In the particular case of $d=2$ it shows that the limits over $n$ and $\beta$ commute, that is,
\begin{equation*}
\lim_{\beta\rightarrow\infty}\lim_{n\rightarrow\infty}\frac{\log\tau(C^{1,n}_{\beta n})}{\beta n}=\lim_{n\rightarrow\infty}\lim_{\beta\rightarrow\infty}\frac{\log\tau(C^{1,n}_{\beta n})}{\beta n}
\end{equation*}
which does not seem obvious a priori.

\nocite{*}
\bibliographystyle{plain}
\bibliography{bibliographyC1n}

\begin{thebibliography}{1}

\bibitem{MR1271140}
Norman Biggs.
\newblock {\em Algebraic graph theory}.
\newblock Cambridge Mathematical Library. Cambridge University Press,
  Cambridge, second edition, 1993.

\bibitem{boesch1986spanning}
F.~T. Boesch and H.~Prodinger.
\newblock Spanning tree formulas and {C}hebyshev polynomials.
\newblock {\em Graphs Combin.}, 2(3):191--200, 1986.

\bibitem{golin2005unhooking}
Mordecai~J. Golin and Yiu~Cho Leung.
\newblock Unhooking circulant graphs: a combinatorial method for counting
  spanning trees and other parameters.
\newblock In {\em Graph-theoretic concepts in computer science}, volume 3353 of
  {\em Lecture Notes in Comput. Sci.}, pages 296--307. Springer, Berlin, 2004.

\bibitem{golin2005counting}
Mordecai~J. Golin, Yiu~Cho Leung, and Yajun Wang.
\newblock Counting spanning trees and other structures in non-constant-jump
  circulant graphs.
\newblock In {\em Algorithms and computation}, volume 3341 of {\em Lecture
  Notes in Comput. Sci.}, pages 508--521. Springer, Berlin, 2004.

\bibitem{golin2010asymptotic}
Mordecai~J. Golin, Xuerong Yong, and Yuanping Zhang.
\newblock The asymptotic number of spanning trees in circulant graphs.
\newblock {\em Discrete Math.}, 310(4):792--803, 2010.

\bibitem{louis2013asymptotics}
Justine Louis.
\newblock Asymptotics for the number of spanning trees in circulant graphs and
  degenerating d-dimensional discrete tori.
\newblock {\em arXiv preprint arXiv:1306.1409}, 2013.

\bibitem{MR2160416}
Russell Lyons.
\newblock Asymptotic enumeration of spanning trees.
\newblock {\em Combin. Probab. Comput.}, 14(4):491--522, 2005.

\bibitem{zhang2005chebyshev}
Yuanping Zhang, Xuerong Yong, and Mordecai~J. Golin.
\newblock Chebyshev polynomials and spanning tree formulas for circulant and
  related graphs.
\newblock {\em Discrete Math.}, 298(1-3):334--364, 2005.

\end{thebibliography}

\end{document}